\documentclass[10pt, a4paper]{article}

\usepackage{cite, amsmath, amssymb, booktabs, lastpage, graphicx, multirow}
\usepackage[hidelinks,hyperfootnotes=false]{hyperref}

\usepackage{geometry}
\usepackage{setspace}

\usepackage[mathscr]{euscript}
\usepackage{amsthm}
\usepackage{xcolor}
\theoremstyle{plain}
\newtheorem{lemma}{Lemma}
\newtheorem{theorem}{Theorem}

\newtheorem*{corollary}{Corollary}

\theoremstyle{remark}

\theoremstyle{definition}

\makeatletter
\renewcommand{\maketitle}{
\begin{center}

{\Large\bfseries \@title\par}
\vspace{6mm}

{\large\bfseries \@author\par}
\vspace{4mm}

{\itshape \@address\par}
\vspace{2mm}

{\small\ttfamily \@email\par}
\vspace{5mm}

\vspace{5mm}

\end{center}
}
\makeatother

\makeatletter
\newcommand{\address}[1]{\gdef\@address{#1}}
\newcommand{\email}[1]{\gdef\@email{#1}}
\makeatother

\address{}
\email{}

\usepackage{fancyhdr}

\usepackage[margin=1cm,%
			font=footnotesize,%
			format=hang,%
			labelsep=period,%
			labelfont=bf]{caption}
\allowdisplaybreaks

\title{On the Wiener and Harary Indices of Generalized Splitting and Shadow-Splitting Graphs}
\author{Purva J. Makadiya$^{a,}$\footnote{Corresponding author.}, Mahesh M. Jariya$^b$, Prashant J. Makadiya$^c$}
\address{$^{a,b}$Department of Mathematics, \\ Saurashtra University, Rajkot-360005, \\ Gujarat, India.
    }
\email{purvamakadiya2000@gmail.com, mahesh.jariya@gmail.com, prashantmakadiya1996@gmail.com}

\date{\today}

\begin{document}
\maketitle
\thispagestyle{empty}

\begin{abstract}
In this paper, we determine the Wiener index and the Harary index for the $(p,q)$-generalized splitting graph $S_{p,q}(G)$ and the $(c,k)$-shadow-splitting graph $H_{c,k}(G)$ for a connected graph $G$.
\end{abstract}

\vspace{2mm}
\noindent\textbf{Keywords:} generalized splitting graph, shadow-splitting graph.

\vspace{1mm}
\noindent\textbf{2020 Mathematics Subject Classification:} 05C12; 05C09; 05C76.

\onehalfspacing
\section{Introduction}
Within the discipline of chemical graph theory, molecular topologies are systematically characterized using numerical invariants termed topological indices. These mathematical descriptors are foundational for the development of QSPR and QSAR models, which facilitate the predictive estimation of physical, chemical, and biological properties of a compound \cite{topological_review}. 

One of the most important topological indices in this field is the Wiener index \cite{wiener1947}, introduced by Harry Wiener in 1947 to predict the boiling points of certain chemicals (alkanes). For a connected graph $G$, the Wiener index is defined as
\[
W(G)=\frac{1}{2}\sum_{u,v\in V(G)} d_G(u,v),
\]
where $d_G(u,v)$ denotes the length of a shortest path between the vertices $u$ and $v$. Equivalently, the Wiener index is the sum of the shortest path distances over all unordered pairs of vertices in $G$.

In 1993, Plav\v{s}i\'{c} et al. \cite{plavsic1993} and Ivanciuc et al. \cite{ivanciuc1993} independently introduced the Harary index, named after Frank Harary, which assigns a greater weight to closer vertices by summing reciprocal distances, defined as:
\[
H(G) = \frac{1}{2} \sum_{u \neq v} \frac{1}{d_G(u, v)}.
\]

Sampathkumar and Walikar \cite{sampathkumar1980} introduced the splitting graph $S(G)$, which is constructed by adding a new vertex $v'$ for every original vertex $v \in V(G)$ such that $v'$ is connected to the entire neighborhood of $v$. The Wiener and Harary indices for this particular class were recently evaluated by Campe\~{n}a et al. \cite{campena2022}.

More recently, M. E. Abdel-Aal \cite{abdel2013} generalized this concept by introducing the $q$-splitting graph, denoted as $S_q(G)$. This structure is constructed by taking $q$ disjoint copies of $G$ alongside a single set of independent splitting vertices. Each splitting vertex corresponds to a specific vertex $v$ in the original graph $G$. To form the edges, each splitting vertex is connected to all the neighbors of $v$ across every single one of the $q$ copies of $G$.

Extending this framework, Dudhat, Kaneria, and Popat introduced the $(p,q)$-generalized splitting graph, denoted as $S_{p,q}(G)$ \cite{dudhat2026}. For integers $p, q \ge 1$, this structure is constructed by taking $p$ disjoint copies of the graph $G$ alongside $q$ independent sets of splitting vertices. Each vertex within these splitting sets connects to its corresponding neighbors in all $p$ copies of the base graph, while the copies themselves remain entirely disjoint. 

In the same paper, Dudhat et al. \cite{dudhat2026} introduced another graph construction called the $(c,k)$-shadow-splitting graph, denoted by $H_{c,k}(G)$. Similarly to the generalized splitting graph, this structure is generated using $c$ disjoint copies of a base graph $G$ alongside $k$ independent sets of isolated vertices. However, the defining characteristic of this model is the incorporation of ``shadow edges.'' These supplementary edges connect different copies of the graph, ensuring that the distinct copies of $G$ are pairwise interconnected rather than remaining isolated components.

The main objective of this paper is to derive the Wiener and Harary indices for the $(p,q)$-generalized splitting graph $S_{p,q}(G)$ and the $(c,k)$-shadow-splitting graph $H_{c,k}(G)$ derived from any connected base graph $G$. Using a unified block distance matrix methodology, we derive exact analytical expressions for these indices. These formulations are expressed in terms of the original indices $W(G)$ and $H(G)$, the order of the graph $n$ and size $m$, and the number of edges $m^{\prime\prime}$ that do not participate in any triangular cycles.

The remainder of this paper is organized as follows: Section 2 outlines the fundamental concepts and notations utilized throughout the study. Section 3 investigates the topological indices for the generalized splitting graphs $S_{p,q}(G)$. Section 4 establishes the block distance matrix to derive the indices for the shadow-splitting graphs $H_{c,k}(G)$. Finally, Section 5 provides our concluding remarks.

\section{Preliminaries}
Throughout this paper, we consider only finite, simple, and connected graphs.
Consider a graph $G = (V, E)$ with vertex set $V$ of order $n$ and edge set $E$ of size $m$.
Let $m^{\prime}$ denote the number of edges lying in at least one triangle, and let $m^{\prime\prime}$ denote the number of edges that do not belong to any triangle. Two standard matrix representations are used in this work. First, the adjacency matrix $A(G) = [a_{ij}]$ \cite{Horn1991} is defined as a square matrix of dimension $n \times n$, where an entry $a_{ij}$ equals $1$ if an edge exists between vertices $v_i$ and $v_j$ (denoted $v_i \sim v_j$), and $0$ otherwise. Second, the shortest path distances are recorded in the distance matrix $D(G) = [d_{ij}]$, where $d_{ij} = d_G(v_i, v_j)$. Additionally, $I_n$ represents the identity matrix of order $n$.

For an arbitrary matrix $M$, let $\sum M$ denote the sum over all of its elements. Similarly, let $\sum\overline{M}$ denote the sum of the reciprocals of all non-zero elements within $M$.

Before proceeding to our main results, we recall the Wiener and Harary index formulas for some standard graph families \cite{campena2022} in Table \ref{tab:indices} below.

\begin{table}[htbp]
\centering
\renewcommand{\arraystretch}{1.5}
\begin{tabular}{lcc}
\hline
\textbf{Graph Family} & \textbf{Wiener Index $W(G)$} & \textbf{Harary Index $H(G)$} \\
\hline
\textbf{Complete graph} $K_n$ & $\frac{n(n-1)}{2}$ & $\frac{n(n-1)}{2}$ \\
\textbf{Star graph} $S_n$ & $(n-1)^2$ & $\frac{(n-1)(n+2)}{4}$ \\
\textbf{Complete bipartite graph} $K_{a,b}$ ($a+b=n$) & $a^2+ab+b^2-a-b$ & $\frac{a^2+b^2-a-b}{4}+ab$ \\
\textbf{Path graph} $P_n$ & $\frac{n(n^2-1)}{6}$ & $nH_{n-1}-n+1$ \\
\textbf{Cycle graph} $C_n$ & 
$\begin{cases} 
\frac{n^3}{8}, & \text{even } n \\ 
\frac{n(n^2-1)}{8}, & \text{odd } n 
\end{cases}$ & 
$\frac{1+(-1)^n}{2}+nH_{\lfloor(n-1)/2\rfloor}$ \\
\hline
\end{tabular}
\caption{Wiener and Harary indices for standard graph families.}
\label{tab:indices}
\end{table}

\vspace{0.5cm}
\noindent\textbf{Note on $P_n$:} A previous computation by Campe\~{n}a et al. \cite{campena2022} obtained the Harary index of the path graph as $nH_n - 1$. In this work, we utilize the corrected formulation $nH_{n-1} - n + 1$ originally established by Plav\v{s}i\'{c} et al. \cite{plavsic1993}, where $H_k = \sum_{i=1}^k \frac{1}{i}$ denotes the $k$-th harmonic number.

\section{Indices of the \texorpdfstring{$(p,q)$}{(p,q)}-Generalized Splitting Graphs \texorpdfstring{$S_{p,q}(G)$}{S\_\{p,q\}(G)}}
To evaluate the topological indices of the $(p,q)$-generalized splitting graph, we first analyze its vertex set, $V(S_{p,q}(G))$, which has order $(p+q)n$. We can logically decompose this set into $p$ distinct subsets $V^{(1)}, V^{(2)}, \dots, V^{(p)}$ (representing the disjoint copies of the base graph $G$) and $q$ distinct subsets $U^{(1)}, U^{(2)}, \dots, U^{(q)}$ (representing the isolated splitting vertices). 

Following the approach of Campe\~{n}a et al. \cite{campena2022}, we utilize an auxiliary distance matrix $W = [w_{ij}]_{n \times n}$ to map the shortest paths between the splitting vertices. The entries $w_{ij}$ are determined by the neighborhood intersections within the base graph $G$. Specifically, the diagonal entries are zero ($w_{ii} = 0$). For distinct vertices $v_i$ and $v_j$, if they share at least one common neighbor (i.e., $N_G(v_i) \cap N_G(v_j) \neq \emptyset$), the distance is $w_{ij} = 2$. If they are adjacent but possess no common neighbors, $w_{ij} = 3$. Under all other conditions, the distance resolves to the standard shortest path in the base graph, yielding $w_{ij} = d_G(v_i, v_j)$.

\begin{lemma}\label{lem:distance_blocks}
Let $D_{X,Y}$ represent the submatrix containing the distances between vertex sets $X$ and $Y$ in $S_{p,q}(G)$. By considering shortest paths, the distance matrix $D(S_{p,q}(G))$ is partitioned into the following block components:
\begin{itemize}
    \item \textbf{Intra-set distances:} $D_{V^{(l)}, V^{(l)}} = D(G)$ for all $1 \le l \le p$, and $D_{U^{(k)}, U^{(k)}} = W$ for all $1 \le k \le q$.
    \item \textbf{Inter-set distances:} $D_{V^{(l)}, V^{(h)}} = W + 2I_n$ for $l \neq h$, and $D_{U^{(k)}, U^{(h)}} = W + 2I_n$ for $k \neq h$.
    \item \textbf{Cross-set distances:} $D_{V^{(l)}, U^{(k)}} = D(G) + 2I_n$ for all $l$ and $k$.
\end{itemize}
\end{lemma}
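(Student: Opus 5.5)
The plan is to reduce every distance in $S_{p,q}(G)$ to a distance in $G$. Write $v_i^{(l)}$ for the copy of $v_i$ in $V^{(l)}$ and $u_i^{(k)}$ for the splitting vertex of $v_i$ in $U^{(k)}$. By the definition, $v_i^{(l)}\sim v_j^{(l)}$ iff $v_i\sim v_j$, and $u_i^{(k)}\sim v_j^{(l)}$ iff $v_i\sim v_j$; there are no other edges.

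The lower bounds come from the projection $\varphi$ that sends $v_i^{(l)}$ and $u_i^{(k)}$ to $v_i$. Every edge of $S_{p,q}(G)$ maps to an edge of $G$, so $\varphi$ is a graph homomorphism. A walk of length $r$ therefore projects to a walk of length $r$ in $G$, which gives $d_{S_{p,q}(G)}(x,y)\ge d_G(\varphi(x),\varphi(y))$ for every pair. Two structural facts supply the extra constraints. First, each $U^{(k)}$ is independent, and so is $\bigcup_k U^{(k)}$. Second, distinct copies $V^{(l)}$ and $V^{(h)}$ are joined only through splitting vertices. Also, $u_i^{(k)}$ and $u_i^{(h)}$ have the same neighbourhood $\{v_x^{(l)}: v_x\sim v_i,\ 1\le l\le p\}$, so they are non-adjacent twins.

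For the upper bounds I fix a shortest path $v_i=a_0,a_1,\dots,a_d=v_j$ in $G$ and lift it, substituting a splitting vertex at one end or at one interior position as needed. The blocks are then handled as follows.
\begin{itemize}
\item $D_{V^{(l)},V^{(l)}}=D(G)$: lift the path inside $V^{(l)}$; the projection bound gives equality.
\item $D_{V^{(l)},U^{(k)}}=D(G)+2I_n$: for $i\ne j$, use the path $v_i^{(l)},a_1^{(l)},\dots,a_{d-1}^{(l)},u_j^{(k)}$. For $i=j$, the distance is not $1$ because $u_i^{(k)}\not\sim v_i^{(l)}$. It equals $2$ via $v_i^{(l)},v_x^{(l)},u_i^{(k)}$ for any neighbour $v_x$ of $v_i$, which exists since $G$ is connected with $n\ge2$.
\item $D_{U^{(k)},U^{(k)}}=W$: here I split into the three cases defining $w_{ij}$. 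If $N_G(v_i)\cap N_G(v_j)\ni v_x$, the path $u_i,v_x^{(1)},u_j$ gives $2$, and $1$ is impossible by independence. If $v_i\sim v_j$ with no common neighbour, length $1$ is impossible by independence. Length $2$ is impossible because a middle vertex would project to a common neighbour. The path $u_i,v_j^{(1)},v_i^{(1)},u_j$ gives $3$. Otherwise $d_G\ge2$; if $d_G=2$ there is a common neighbour, which is the first case. If $d_G\ge3$, the path $u_i,a_1^{(1)},\dots,a_{d-1}^{(1)},u_j$ attains the projection bound.
\item $D_{U^{(k)},U^{(h)}}=W+2I_n$ for $k\neq h$: apply the twin property to the previous block for the off-diagonal entries. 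The diagonal entry $d(u_i^{(k)},u_i^{(h)})=2$ comes from any common neighbour $v_x^{(1)}$.
\item $D_{V^{(l)},V^{(h)}}=W+2I_n$ for $l\ne h$: any path must use a splitting vertex, so the distance is never $1$. A path of length $2$ must be $v_i^{(l)},u_x,v_j^{(h)}$, and this requires $v_x\in N(v_i)\cap N(v_j)$. This gives the diagonal value $2$ and the value $2$ in the common-neighbour case. In the adjacent, no-common-neighbour case, the path $v_i^{(l)},v_j^{(l)},u_i^{(1)},v_j^{(h)}$ gives $3$, and this is optimal. If $d_G\ge3$, use the path $v_i^{(l)},u_{a_1}^{(1)},a_2^{(h)},\dots,a_d^{(h)}$, which has length $d$.
\end{itemize}

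I expect the main obstacle to be the lower bounds in the exceptional case of $W$, namely showing that distance $2$ is impossible when $v_i\sim v_j$ have no common neighbour. I would handle it uniformly by noting that the middle vertex of any path of length $2$ must project to a common neighbour of $v_i$ and $v_j$. The only exception would be a projected walk that repeats a vertex, and that is ruled out because $v_i\neq v_j$ and a length-$2$ walk $v_i,v_j,v_j$ is not allowed in $G$. Everything else follows from the homomorphism bound together with explicit lifted paths.
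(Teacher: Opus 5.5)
Your proposal is correct, and it is essentially the case-by-case shortest-path analysis the paper alludes to (``by considering shortest paths''); the paper states the lemma without proof and never writes that analysis out. Your projection homomorphism $\varphi$, which gives $d_{S_{p,q}(G)}(x,y)\ge d_G(\varphi(x),\varphi(y))$, makes the lower bounds rigorous, and together with the lifted paths, the middle-vertex argument for length-$2$ paths and the twin property of $u_j^{(k)},u_j^{(h)}$, it verifies every block, including the diagonal entries $2$ that need $n\ge 2$.
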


\begin{theorem}\label{thm:wiener_spq}
For any connected graph $G$ of order $n$ and size $m$, and for any $p, q \geq 1$, the Wiener index of the generalized splitting graph $S_{p,q}(G)$ is given by:
$$W(S_{p,q}(G)) = (p+q)^2 W(G) + [p(p-1)+q^2](m+m^{\prime\prime}) + n(p+q)(p+q-1),$$
where $m^{\prime\prime}$ represents the number of edges in $G$ that do not belong to any triangle.
\end{theorem}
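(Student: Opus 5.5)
The plan is to sum the distance matrix of $S_{p,q}(G)$ block by block, using the partition from Lemma~\ref{lem:distance_blocks}. Since $W(S_{p,q}(G)) = \frac{1}{2}\sum D(S_{p,q}(G))$, everything reduces to three block sums: $\sum D(G)$, $\sum W$, and $\sum I_n = n$.

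The first is immediate: $\sum D(G) = 2W(G)$. The only real step is $\sum W$, the obstacle I expect, which I would settle by comparing $w_{ij}$ with $d_G(v_i,v_j)$ pair by pair, following the definition of $W$ given before the lemma.
\begin{itemize}
\item If $d_G(v_i,v_j)=2$, the two vertices share a neighbour, so $w_{ij}=2=d_G(v_i,v_j)$.
\item If $d_G(v_i,v_j)\ge 3$, then $w_{ij}=d_G(v_i,v_j)$ by definition.
\item If $v_i\sim v_j$ and the edge $v_iv_j$ lies in a triangle, they have a common neighbour, so $w_{ij}=2=d_G(v_i,v_j)+1$.
\item If $v_i\sim v_j$ and the edge lies in no triangle, then $w_{ij}=3=d_G(v_i,v_j)+2$.
\end{itemize}
Summing over unordered pairs gives $W(G)+m'+2m'' = W(G)+m+m''$, using $m'+m''=m$. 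Hence
\[
\textstyle\sum W = 2\bigl(W(G)+m+m''\bigr).
\]
Care is needed only to confirm that the first and third cases really do share a common neighbour, which is exactly how $W$ was defined.

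Next I add up the blocks, counting ordered block pairs.
\begin{itemize}
\item The $p$ diagonal blocks $D_{V^{(l)},V^{(l)}}$ contribute $2pW(G)$.
\item The $q$ diagonal blocks $D_{U^{(k)},U^{(k)}}$ contribute $2q(W(G)+m+m'')$.
\item The $p(p-1)$ ordered off-diagonal $V$--$V$ blocks each contribute $\sum(W+2I_n) = 2(W(G)+m+m'')+2n$.
\item The $q(q-1)$ ordered off-diagonal $U$--$U$ blocks each contribute the same amount.
\item The $2pq$ ordered cross blocks $V$--$U$ and $U$--$V$ each contribute $\sum(D(G)+2I_n) = 2W(G)+2n$.
\end{itemize}

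Finally, I halve the total and collect coefficients.
\begin{itemize}
\item The coefficient of $W(G)$ is $p+q+p(p-1)+q(q-1)+2pq = (p+q)^2$.
\item The coefficient of $m+m''$ is $q+p(p-1)+q(q-1) = p(p-1)+q^2$.
\item The coefficient of $n$ is $p(p-1)+q(q-1)+2pq = (p+q)(p+q-1)$.
\end{itemize}
These are exactly the three terms in the statement of Theorem~\ref{thm:wiener_spq}.
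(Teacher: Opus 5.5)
Your proposal is correct and follows essentially the same route as the paper: you sum the blocks of Lemma~\ref{lem:distance_blocks} and evaluate $\sum W = 2W(G)+2m'+4m'' = 2W(G)+2m+2m''$ by comparing $w_{ij}$ with $d_G(v_i,v_j)$ case by case. The only difference is cosmetic: you compute $\sum W$ first and then collect coefficients directly, whereas the paper first writes $\sum D(S_{p,q}(G))$ in terms of $\sum W$ and substitutes afterwards.
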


\begin{proof}
The Wiener index is half the sum of all entries in the distance matrix. Summing the entries of the block matrices established in Lemma \ref{lem:distance_blocks}, the total sum becomes:
\begin{align*}
\sum D(S_{p,q}(G)) &= p(2W(G)) + p(p-1)\left(\sum W + 2n\right) + q\sum W \\
&\quad + q(q-1)\left(\sum W + 2n\right) + 2pq(2W(G) + 2n) \\
&= 2p(1+2q)W(G) + [p(p-1)+q^2]\sum W + 2n(p+q)(p+q-1).
\end{align*}
Next, we relate the sum of the entries of $W$ to the parameters of $G$. The entries of $W$ are:
\[
w_{ij} = \begin{cases}
d_G(v_i, v_j) + 2 & \text{if } v_i \sim v_j \text{ and } N_G(v_i) \cap N_G(v_j) = \emptyset, \\
d_G(v_i, v_j) + 1 & \text{if } v_i \sim v_j \text{ and } N_G(v_i) \cap N_G(v_j) \neq \emptyset, \\
d_G(v_i, v_j) & \text{otherwise}.
\end{cases}
\]
Thus, the sum of all entries in $W$ is:
\[
\sum W = \sum D(G) + \sum_{v_i \sim v_j \text{ with common neighbor}} 1 + \sum_{v_i \sim v_j \text{ with no common neighbor}} 2.
\]
Let $m^{\prime}$ be the number of edges in $G$ whose endpoints share at least one common neighbor, and $m^{\prime\prime}$ be the number of edges whose endpoints share no common neighbor. Since each edge is counted twice in the sum, we have:
\[
\sum W = 2 W(G) + 2m^{\prime} + 4m^{\prime\prime} = 2 W(G) + 2(m^{\prime} + m^{\prime\prime}) + 2m^{\prime\prime} = 2 W(G) + 2m + 2m^{\prime\prime}.
\]
Substituting $\sum W = 2 W(G) + 2m + 2m^{\prime\prime}$ back into the sum gives:
\begin{align*}
\sum D(S_{p,q}(G)) &= 2p(1 + 2q) W(G) + [p(p-1) + q^2] (2 W(G) + 2m + 2m^{\prime\prime}) \\
&\quad + 2n(p+q)(p+q-1) \\
&= 2(p+q)^2 W(G) + 2[p(p-1) + q^2] (m + m^{\prime\prime}) \\
&\quad + 2n(p+q)(p+q-1).
\end{align*}
Dividing both sides by 2, we obtain the desired formula.
\end{proof}

\begin{theorem}\label{thm:harary_spq}
For any connected graph $G$ of order $n$ and size $m$, and for any $p, q \geq 1$, the Harary index of $S_{p,q}(G)$ is:
$${H}(S_{p,q}(G)) = (p+q)^2 H(G) - \frac{p(p-1)+q^2}{2}\left(m+\frac{1}{3}m^{\prime\prime}\right) + \frac{n}{4}(p+q)(p+q-1),$$ where $m^{\prime\prime}$ represents the number of edges in $G$ that do not belong to any triangle.
\end{theorem}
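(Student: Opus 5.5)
The plan is to mirror the proof of Theorem~\ref{thm:wiener_spq}, replacing ``sum of entries'' by ``sum of reciprocals of nonzero entries.'' Since $H(S_{p,q}(G)) = \frac12 \sum\overline{D(S_{p,q}(G))}$, I will compute $\sum\overline{M}$ for each block type in Lemma~\ref{lem:distance_blocks}. Then I multiply by the number of blocks of each type: $p$ blocks $D(G)$, $q$ blocks $W$, and $p(p-1)+q(q-1)$ ordered blocks $W+2I_n$. The cross blocks $D(G)+2I_n$ number $2pq$, counting both $V^{(l)},U^{(k)}$ and $U^{(k)},V^{(l)}$.

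\textbf{Block sums.} First, $\sum\overline{D(G)} = 2H(G)$. For $W$, the entries agree with $D(G)$ except on ordered pairs $(v_i,v_j)$ with $v_i\sim v_j$. Such a pair contributes $1/2$ instead of $1$ if the edge lies in a triangle, and $1/3$ instead of $1$ otherwise. Each edge gives two ordered pairs, so
\[
\sum\overline{W} = 2H(G) - 2m^{\prime}\cdot\tfrac12 - 2m^{\prime\prime}\cdot\tfrac23 = 2H(G) - m - \tfrac13 m^{\prime\prime},
\]
using $m^{\prime}+m^{\prime\prime}=m$.

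Adding $2I_n$ only changes the zero diagonal into $n$ entries equal to $2$, each contributing $\tfrac12$. Hence
\[
\sum\overline{W+2I_n} = 2H(G) - m - \tfrac13 m^{\prime\prime} + \tfrac n2, \qquad \sum\overline{D(G)+2I_n} = 2H(G) + \tfrac n2.
\]

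\textbf{Assembly.} Collecting the blocks gives
\[
\sum\overline{D(S_{p,q}(G))} = 2pH(G) + q\Bigl(2H(G)-m-\tfrac13 m^{\prime\prime}\Bigr) + [p(p-1)+q(q-1)]\Bigl(2H(G)-m-\tfrac13m^{\prime\prime}+\tfrac n2\Bigr) + 2pq\Bigl(2H(G)+\tfrac n2\Bigr).
\]
I then group the coefficients:
\begin{itemize}
\item The coefficient of $H(G)$ is $2p+2q+2p(p-1)+2q(q-1)+4pq = 2(p+q)^2$.
\item The coefficient of $-(m+\tfrac13 m^{\prime\prime})$ is $q+p(p-1)+q(q-1) = p(p-1)+q^2$.
\item The $n$-terms give $\tfrac n2[p(p-1)+q(q-1)+2pq] = \tfrac n2(p+q)(p+q-1)$.
\end{itemize}
Halving yields the stated formula.

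\textbf{Main obstacle.} The step needing the most care is the reciprocal bookkeeping for $W$. One must check that the only entries of $W$ differing from $D(G)$ are the adjacent pairs, with values $2$ or $3$ according to whether the edge lies in a triangle. One must also check that each edge is counted twice in the ordered-pair sum. Finally, the $2I_n$ diagonal entries must be counted as new nonzero reciprocals rather than as modifications of existing ones. Once these counts are fixed, the remaining algebra is routine.
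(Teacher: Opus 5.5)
Your proposal is correct and follows essentially the same route as the paper: sum the reciprocals block by block using Lemma~\ref{lem:distance_blocks}, compute $\sum\overline{W} = 2H(G) - m - \frac{1}{3}m^{\prime\prime}$ by correcting only the adjacent-pair entries, then collect coefficients and halve. The only difference is presentational. You group the $H(G)$ coefficient directly as $2(p+q)^2$, whereas the paper first writes $2p(1+2q)H(G) + [p(p-1)+q^2]\sum\overline{W}$ and then substitutes.
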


\begin{proof} The Harary index is half the sum of the reciprocals of all non-zero entries in the distance matrix. Summing the reciprocals of the non-zero entries in the block matrices described in Lemma \ref{lem:distance_blocks}, the total reciprocal sum becomes:
\begin{align*}
\sum \overline{D}(S_{p,q}(G)) &= p(2 H(G)) + p(p-1) \left(\sum \overline{W} + \frac{n}{2}\right) + q \sum \overline{W} \\
&\quad + q(q-1) \left(\sum \overline{W} + \frac{n}{2}\right) + 2pq \left(2 H(G) + \frac{n}{2}\right) \\
&= 2p(1 + 2q) H(G) + [p(p-1) + q^2] \sum \overline{W} \\
&\quad + \frac{n}{2}(p+q)(p+q-1).
\end{align*}
Now, we find $\sum \overline{W}$ by taking the reciprocals of the non-zero entries of $W$:
\begin{align*}
\sum \overline{W} &= \sum_{i \neq j} \frac{1}{w_{ij}} \\
&= \sum_{i \neq j} \frac{1}{d_G(v_i, v_j)} - \sum_{v_i \sim v_j} 1 + \sum_{v_i \sim v_j \text{ with common neighbor}} \frac{1}{2} \\
&\quad + \sum_{v_i \sim v_j \text{ with no common neighbor}} \frac{1}{3}.
\end{align*}
Expressing this in terms of $H(G)$, $m^{\prime}$, and $m^{\prime\prime}$, we get:
\begin{align*}
\sum \overline{W} &= 2 H(G) - 2m + m^{\prime} + \frac{2}{3} m^{\prime\prime} \\
&= 2 H(G) - m^{\prime} - \frac{4}{3} m^{\prime\prime}\\
&= 2 H(G) - m - \frac{1}{3} m^{\prime\prime}.
\end{align*}
Substituting this back into the total reciprocal sum:
\begin{align*}
\sum \overline{D}(S_{p,q}(G)) &= 2p(1 + 2q) H(G) + [p(p-1) + q^2] \left(2 H(G) - m - \frac{1}{3} m^{\prime\prime}\right) \\
&\quad + \frac{n}{2}(p+q)(p+q-1) \\
&= 2(p+q)^2 H(G) - [p(p-1) + q^2] \left(m + \frac{1}{3} m^{\prime\prime}\right) \\
&\quad + \frac{n}{2}(p+q)(p+q-1).
\end{align*}
Dividing both sides by 2, this completes the proof.
\end{proof}

\begin{corollary}
Let $G$ be a connected graph of order $n$ and size $m$. By setting $p = 1$ and $q = 1$, the $(p,q)$-generalized splitting graph $S_{p,q}(G)$ reduces to the standard splitting graph $S(G)$. Consequently, substituting $p = 1$ and $q = 1$ into Theorem \ref{thm:wiener_spq} and Theorem \ref{thm:harary_spq} directly yields the Wiener and Harary indices for the standard splitting graph.

For the Wiener index:
\begin{align*}
W(S(G)) &= (1+1)^2 W(G) + [1(1-1)+1^2](m+m^{\prime\prime}) + n(1+1)(1+1-1) \\
&= 4W(G) + m + m^{\prime\prime} + 2n.
\end{align*}

For the Harary index:
\begin{align*}
H(S(G)) &= (1+1)^2 H(G) - \frac{1(1-1)+1^2}{2}\left(m+\frac{1}{3}m^{\prime\prime}\right) + \frac{n}{4}(1+1)(1+1-1) \\
&= 4H(G) - \frac{1}{2}\left(m+\frac{1}{3}m^{\prime\prime}\right) + \frac{n}{2}.
\end{align*}
\end{corollary}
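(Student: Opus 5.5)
The plan has two steps: identify $S_{1,1}(G)$ with $S(G)$, then specialize Theorem \ref{thm:wiener_spq} and Theorem \ref{thm:harary_spq}. Almost all of the content is in the first step.

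\textbf{Step 1: $S_{1,1}(G)=S(G)$.} With $p=1$ there is a single copy $V^{(1)}$ of $G$. With $q=1$ there is a single independent set $U^{(1)}=\{v_1',\dots,v_n'\}$ of splitting vertices. By the definition of $S_{p,q}(G)$, each $v_i'$ is joined exactly to $N_G(v_i)$ inside $V^{(1)}$, and $U^{(1)}$ is independent. This is verbatim the Sampathkumar--Walikar construction. So the map fixing $V^{(1)}$ and sending $v_i'$ to the splitting vertex of $v_i$ is an isomorphism $S_{1,1}(G)\cong S(G)$. Since $W$ and $H$ are graph invariants, $W(S(G))=W(S_{1,1}(G))$ and $H(S(G))=H(S_{1,1}(G))$.

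For $p=q=1$ the inter-set blocks of Lemma \ref{lem:distance_blocks} (those with $l\neq h$ or $k\neq h$) are simply absent. This is consistent with the coefficients $p(p-1)$ and $q(q-1)$ appearing in the proofs: both vanish at $p=q=1$. So the theorems apply at this boundary value without any separate argument.

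\textbf{Step 2: substitution.} At $p=q=1$ we have $(p+q)^2=4$, $p(p-1)+q^2=1$ and $(p+q)(p+q-1)=2$. Substituting into Theorem \ref{thm:wiener_spq} gives
$$W(S(G))=4W(G)+m+m''+2n.$$
Substituting into Theorem \ref{thm:harary_spq} gives
$$H(S(G))=4H(G)-\tfrac12\left(m+\tfrac13 m''\right)+\tfrac n2.$$

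\textbf{Sanity check.} Take $G=K_2$, so $S(K_2)\cong P_4$, $n=2$, $m=m''=1$, $W(K_2)=H(K_2)=1$.
\begin{itemize}
\item Wiener: the formula gives $4+1+1+4=10$, and Table \ref{tab:indices} gives $W(P_4)=4\cdot 15/6=10$.
\item Harary: the formula gives $4-\tfrac23+1=\tfrac{13}{3}$, and Table \ref{tab:indices} gives $H(P_4)=4H_3-3=\tfrac{13}{3}$.
\end{itemize}
Optionally, one could also compare with the expressions of Campe\~{n}a et al.\ \cite{campena2022} for $S(G)$.

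The only step needing care is Step 1, confirming that the definition of $S_{p,q}(G)$ degenerates exactly to $S(G)$ at $p=q=1$. The arithmetic is routine.
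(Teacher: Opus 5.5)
Your proposal is correct and follows the paper's route exactly: you identify $S_{1,1}(G)$ with $S(G)$ and substitute $p=q=1$ into Theorems \ref{thm:wiener_spq} and \ref{thm:harary_spq}, using $(p+q)^2=4$, $p(p-1)+q^2=1$ and $(p+q)(p+q-1)=2$. Your explicit isomorphism and the $G=K_2$ check (where $S(K_2)\cong P_4$, giving $W=10$ and $H=\tfrac{13}{3}$) are sound additions that the paper does not spell out.
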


\begin{corollary}
Let $G$ be a connected graph of order $n$ and size $m$. By setting $p = 1$, the $(p, q)$-generalized splitting graph $S_{p,q}(G)$ reduces to the $q$-splitting graph $S_q(G)$. Substituting $p = 1$ into Theorem \ref{thm:wiener_spq} and Theorem \ref{thm:harary_spq} yields the exact Wiener and Harary indices for the $q$-splitting graph.

For the Wiener index:
\begin{align*}
W(S_{1,q}(G)) &= (1+q)^2 W(G) + [q(q-1)+q](m+m^{\prime\prime}) + n(1+q)(1+q-1) \\
&= (q+1)^2 W(G) + q^2(m+m^{\prime\prime}) + nq(q+1).
\end{align*}

For the Harary index:
\begin{align*}
H(S_{1,q}(G)) &= (1+q)^2 H(G) - \frac{q(q-1)+q}{2}\left(m+\frac{1}{3}m^{\prime\prime}\right) + \frac{n}{4}(1+q)(1+q-1) \\
&= (q+1)^2 H(G) - \frac{q^2}{2}\left(m+\frac{1}{3}m^{\prime\prime}\right) + \frac{nq(q+1)}{4}.
\end{align*}
\end{corollary}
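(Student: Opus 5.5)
The corollary is a direct specialization of Theorem~\ref{thm:wiener_spq} and Theorem~\ref{thm:harary_spq}, so the plan has two parts. First I justify the structural identification $S_{1,q}(G)\cong S_q(G)$. Then I substitute $p=1$ and simplify.

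For the identification, I compare the two constructions directly. $S_{1,q}(G)$ has one copy $V^{(1)}$ of $G$ together with $q$ independent sets $U^{(1)},\dots,U^{(q)}$. Each splitting vertex $u_i^{(k)}$ is joined to the neighbours of $v_i$ in the single copy. This is exactly the structure described for the $q$-splitting graph: its splitting vertices attach to the neighbourhoods of the corresponding original vertices, and the roles of ``number of copies'' and ``number of splitting sets'' are matched under the convention the paper adopts for $S_q(G)$.

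I would also check that Lemma~\ref{lem:distance_blocks} still applies when $p=1$. The blocks $D_{V^{(l)},V^{(h)}}$ with $l\neq h$ simply do not occur. This is consistent with the factor $p(p-1)=0$ in the proofs of both theorems, so those theorems hold verbatim at $p=1$.

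Then the algebra is routine:
\[
p(p-1)+q^2\big|_{p=1}=q^2,\qquad (p+q)^2=(q+1)^2,\qquad (p+q)(p+q-1)=(q+1)q .
\]
Inserting these into Theorem~\ref{thm:wiener_spq} gives
\[
W(S_{1,q}(G))=(q+1)^2W(G)+q^2(m+m'')+nq(q+1).
\]
Inserting them into Theorem~\ref{thm:harary_spq} gives
\[
H(S_{1,q}(G))=(q+1)^2H(G)-\tfrac{q^2}{2}\bigl(m+\tfrac13 m''\bigr)+\tfrac{nq(q+1)}{4}.
\]

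The only real obstacle is the identification step, specifically pinning down which parameter of $S_q(G)$ (copies versus splitting sets) corresponds to $q$ here. I would settle it by writing out the vertex and edge sets of both graphs and exhibiting the isomorphism explicitly. As a sanity check, setting $q=1$ should recover the earlier corollary for $S(G)$: $4W(G)+m+m''+2n$.
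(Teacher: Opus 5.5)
Your proposal is correct and follows the paper's argument: the corollary comes purely from substituting $p=1$ into Theorems~\ref{thm:wiener_spq} and~\ref{thm:harary_spq}, using $p(p-1)+q^2=q^2$ and $(p+q)(p+q-1)=q(q+1)$, and your remark that the $p(p-1)$ blocks of Lemma~\ref{lem:distance_blocks} simply vanish is accurate. On the identification step you flag, however, your claim that $S_{1,q}(G)$ matches "the convention the paper adopts" is not right as stated: the paper's introduction describes $S_q(G)$ as $q$ copies of $G$ with a single splitting set, which is $S_{q,1}(G)$ and has $3qm$ edges, against $(2q+1)m$ for $S_{1,q}(G)$, so your explicit isomorphism check will only go through under the definition in which one copy of $G$ receives $q$ splitting vertices per original vertex (the formulas are nonetheless correct for $S_{1,q}(G)$).
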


\vspace{0.5cm}
Using the properties of triangle-free graphs (where $m^{\prime\prime} = m$) alongside the foundational indices detailed in Section 2, we can derive the specific indices for several standard graph families. These results are summarized in Table \ref{tab:generalized_splitting}.

\begin{table}[htbp]
\centering
\renewcommand{\arraystretch}{1.8}
\resizebox{\textwidth}{!}{
\begin{tabular}{ll}
\hline
\textbf{Base Graph} & \textbf{Topological Indices for $S_{p,q}(G)$} \\
\hline
\multirow{2}{*}{\textbf{Path} $P_n$ ($n \ge 3$)} & $W = (p+q)^2\left[\frac{n(n^2-1)}{6}\right] + 2[p(p-1)+q^2](n-1) + n(p+q)(p+q-1)$ \\
& ${H} = (p+q)^2(nH_{n-1}-n+1) - \frac{2}{3}[p(p-1)+q^2](n-1) + \frac{n}{4}(p+q)(p+q-1)$ \\
\hline
\multirow{2}{*}{\textbf{Cycle} $C_n$ ($n \ge 4$)} & $W = (p+q)^2W(C_n) + 2n[p(p-1)+q^2] + n(p+q)(p+q-1)$ \\
& ${H} = (p+q)^2H(C_n) - \frac{2n}{3}[p(p-1)+q^2] + \frac{n}{4}(p+q)(p+q-1)$ \\
\hline
\multirow{2}{*}{\textbf{Star} $S_n$ ($n \ge 3$)} & $W = (p+q)^2(n-1)^2 + 2[p(p-1)+q^2](n-1) + n(p+q)(p+q-1)$ \\
& ${H} = (p+q)^2\frac{(n-1)(n+2)}{4} - \frac{2}{3}[p(p-1)+q^2](n-1) + \frac{n}{4}(p+q)(p+q-1)$ \\
\hline
\multirow{2}{*}{\textbf{Bipartite} $K_{a,b}$} & $W = (p+q)^2(a^2+ab+b^2-a-b) + 2ab[p(p-1)+q^2] + n(p+q)(p+q-1)$ \\
& ${H} = (p+q)^2\left(\frac{a^2+b^2-a-b}{4}+ab\right) - \frac{2ab}{3}[p(p-1)+q^2] + \frac{n}{4}(p+q)(p+q-1)$ \\
\hline
\multirow{2}{*}{\textbf{Complete} $K_n$ ($n \ge 3$)} & $W = [(p+q)^2 + p(p-1) + q^2]\frac{n(n-1)}{2} + n(p+q)(p+q-1)$ \\
& ${H} = \left[(p+q)^2 - \frac{p(p-1)+q^2}{2}\right]\frac{n(n-1)}{2} + \frac{n}{4}(p+q)(p+q-1)$ \\
\hline
\end{tabular}
}
\caption{Wiener and Harary indices of generalized splitting graphs for specific families.}
\label{tab:generalized_splitting}
\end{table}

\section{Indices of the $(c,k)$-Shadow-Splitting Graphs \texorpdfstring{$H_{c,k}(G)$}{H\_\{c,k\}(G)}}
In this section, we turn our attention to the topological descriptors of the $(c,k)$-shadow-splitting graph, denoted by $H_{c,k}(G)$. The vertex set of this composite structure has order $(c+k)n$. To systematically analyze the distances, we partition the vertex set into two main groups: $c$ subsets $V^{(1)}, V^{(2)}, \dots, V^{(c)}$ corresponding to the copies of the base graph $G$, and $k$ subsets $U^{(1)}, U^{(2)}, \dots, U^{(k)}$ representing the isolated splitting vertices.
Utilizing the auxiliary matrix $W$ defined in Section 3, we can partition the complete distance matrix of $H_{c,k}(G)$ into distinct $n \times n$ submatrices.

\begin{lemma}\label{lem:shadow_blocks}
Let $D_{X,Y}$ denote the submatrix capturing the shortest path distances between an arbitrary vertex set $X$ and set $Y$ within $H_{c,k}(G)$. A case-by-case analysis of the shortest paths reveals the following block structure for $D(H_{c,k}(G))$:
\begin{itemize}
    \item \textbf{Base copy interactions:} Within the same copy, $D_{V^{(l)}, V^{(l)}} = D(G)$ for all $1 \le l \le c$. Across distinct copies, $D_{V^{(l)}, V^{(h)}} = D(G) + 2I_n$ for $l \neq h$.
    \item \textbf{Splitting set interactions:} Within the same splitting set, $D_{U^{(r)}, U^{(r)}} = W$ for all $1 \le r \le k$. Across distinct splitting sets, $D_{U^{(r)}, U^{(s)}} = W + 2I_n$ for $r \neq s$.
    \item \textbf{Cross-set interactions:} Between any base copy and splitting set, $D_{V^{(l)}, U^{(r)}} = D(G) + 2I_n$.
\end{itemize}
\end{lemma}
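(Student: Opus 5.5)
The plan is to compute every distance in $H_{c,k}(G)$ by a projection argument. We prove matching lower and upper bounds, and handle pairs of vertices with distinct base vertices separately from pairs sharing a base vertex. We use the construction of Dudhat et al.: for $u\sim v$ in $G$, each copy vertex $u^{(l)}$ is adjacent to $v^{(l)}$ and, through the shadow edges, to $v^{(h)}$ for every $h\neq l$. Each splitting vertex $u'^{(r)}$ is adjacent to $v^{(l)}$ for all $l$ and all $v\in N_G(u)$. The sets $U^{(r)}$ are independent, and there are no edges between different splitting sets. We also assume $n\ge 2$, so that every vertex of $G$ has a neighbour.

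\textbf{Lower bound.} Define $\varphi\colon V(H_{c,k}(G))\to V(G)$ by sending $u^{(l)}$ and $u'^{(r)}$ to $u$. By construction every edge of $H_{c,k}(G)$ is mapped to an edge of $G$, so a path of length $t$ is mapped to a walk of length $t$ in $G$. Hence $d(x,y)\ge d_G(\varphi(x),\varphi(y))$. If $\varphi(x)=\varphi(y)$ but $x\neq y$, the image is a closed walk of positive length. Since $G$ has no loops, that length is at least $2$, which gives the diagonal entries $2$ in the blocks $D(G)+2I_n$ and $W+2I_n$.

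There is one extra constraint for a pair of splitting vertices $u'^{(r)},v'^{(s)}$ with $u\sim v$. They are not adjacent, so their distance is at least $2$. Distance exactly $2$ requires a middle vertex $x^{(l)}$ adjacent to both, which forces $x\in N_G(u)\cap N_G(v)$. So the distance is at least $3$ when the endpoints share no common neighbour. This reproduces exactly the definition of $w_{ij}$.

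\textbf{Upper bound.} We exhibit explicit paths, using neighbours $x$ of the relevant vertices.
\begin{itemize}
\item Copy to copy, distinct bases: take a shortest $u$--$v$ path in $G$ and lift it into $V^{(l)}$. If the target lies in copy $h\neq l$, replace one edge by the corresponding shadow edge. The length stays $d_G(u,v)$.
\item Copy to copy, same base in different copies: $u^{(l)}\!-\!x^{(h)}\!-\!u^{(h)}$ has length $2$.
\item Copy to splitting vertex: for $u\neq v$, lift a shortest path into the copies and replace its last vertex $v^{(\cdot)}$ by $v'^{(r)}$. This is legal because the penultimate vertex is a neighbour of $v$, and it gives length $d_G(u,v)$. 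For the same base, $u^{(l)}\!-\!x^{(l)}\!-\!u'^{(r)}$ has length $2$.
\item Splitting to splitting, non-adjacent distinct bases: replace both endpoints of a lifted shortest path by splitting vertices, giving length $d_G(u,v)\ge 2$.
\item Splitting to splitting, $u\sim v$ with a common neighbour $x$: $u'^{(r)}\!-\!x^{(1)}\!-\!v'^{(s)}$ has length $2$.
\item Splitting to splitting, $u\sim v$ with no common neighbour: $u'^{(r)}\!-\!v^{(1)}\!-\!u^{(2)}\!-\!v'^{(s)}$ has length $3$. This uses a shadow edge and so needs $c\ge 2$. If $c=1$, use instead $u'\!-\!v^{(1)}\!-\!u^{(1)}\!-\!v'$, which has the same length.
\item Splitting to splitting, same base in different sets: $u'^{(r)}\!-\!x^{(1)}\!-\!u'^{(s)}$ has length $2$.
\end{itemize}
Matching these upper bounds with the lower bounds gives each block of Lemma \ref{lem:shadow_blocks}.

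The main obstacle is the splitting--splitting case for adjacent base vertices. Here the projection bound alone is too weak, and one must argue that a length-$2$ path exists exactly when the endpoints have a common neighbour. This is the step where we must carefully check the adjacency rules: splitting vertices are never adjacent to each other, and they are adjacent only to copy vertices whose base is a neighbour.
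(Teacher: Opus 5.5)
Your proof is correct. It is exactly the case-by-case shortest-path analysis that the paper asserts without writing out, and your projection $\varphi$ onto $G$ is a clean way to obtain all the lower bounds at once. Your explicit assumption $n\ge 2$ is genuinely needed: for $G=K_1$ the graph $H_{c,k}(G)$ is just $c+k$ isolated vertices, so the paper's lemma tacitly requires it as well.
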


\begin{theorem}\label{thm:wiener_hck}
Let $G$ be a connected graph with order $n$ and size $m$. For any integers $c, k \ge 1$, the Wiener index of the $(c,k)$-shadow-splitting graph is evaluated as follows:
$$W(H_{c,k}(G)) = (c+k)^2 W(G) + k^2(m+m^{\prime\prime}) + n(c+k)(c+k-1),$$
where $m^{\prime\prime}$ denotes the quantity of edges in $G$ that are not part of any triangle.
\end{theorem}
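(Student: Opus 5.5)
The plan mirrors the proof of Theorem \ref{thm:wiener_spq}: we compute $W(H_{c,k}(G))=\frac12\sum D(H_{c,k}(G))$ by summing the $(c+k)^2$ blocks of the distance matrix given in Lemma \ref{lem:shadow_blocks}. We then reduce everything to $W(G)$, $n$, $m$, $m''$ using the identity $\sum W = 2W(G)+2m+2m''$, already established in the proof of Theorem \ref{thm:wiener_spq}.

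\textbf{Block counts.} We group the blocks by type and record the contribution of each type to the ordered sum:
\begin{itemize}
\item the $c$ diagonal base blocks $D(G)$ contribute $c\cdot 2W(G)$;
\item the $c(c-1)$ ordered off-diagonal base blocks $D(G)+2I_n$ contribute $c(c-1)(2W(G)+2n)$;
\item the $k$ diagonal splitting blocks $W$ contribute $k\sum W$;
\item the $k(k-1)$ off-diagonal splitting blocks $W+2I_n$ contribute $k(k-1)(\sum W+2n)$;
\item the $2ck$ cross blocks (both orders, each $D(G)+2I_n$ or its transpose, with the same sum) contribute $2ck(2W(G)+2n)$.
\end{itemize}

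\textbf{Collecting terms.} The coefficient of $W(G)$ from the base and cross blocks is $2c+2c(c-1)+4ck = 2c^2+4ck$. The splitting blocks give $k^2\sum W$. The $I_n$ terms give $2n[c(c-1)+k(k-1)+2ck] = 2n(c+k)(c+k-1)$. Substituting $\sum W = 2W(G)+2m+2m''$ adds $2k^2W(G)$, so the $W(G)$ coefficient becomes $2(c+k)^2$, and the splitting blocks contribute $2k^2(m+m'')$. Halving then yields the stated formula.

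The only delicate point is bookkeeping: the cross blocks must be counted in both orders, and the $2I_n$ contributions combine to $(c+k)(c+k-1)$. The structural content is entirely in Lemma \ref{lem:shadow_blocks}, which we take as given, together with the evaluation of $\sum W$ from Section 3. So no genuine obstacle is expected beyond verifying that $2c+2c(c-1)+4ck+2k^2=2(c+k)^2$, which is immediate.
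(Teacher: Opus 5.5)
Your proposal is correct and follows the paper's proof essentially line by line. Like the paper, you sum the $(c+k)^2$ blocks from Lemma \ref{lem:shadow_blocks} to get $2c(c+2k)W(G)+k^2\sum W+2n(c+k)(c+k-1)$, then substitute $\sum W=2W(G)+2m+2m''$ and halve.
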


\begin{proof} By definition, the Wiener index is half the sum of all entries in the distance matrix. Summing the entries of the distinct submatrices outlined in Lemma \ref{lem:shadow_blocks} yields:
\begin{align*}
\sum D(H_{c,k}(G)) &= c(2W(G)) + c(c-1)\left(2W(G) + 2n\right) + k\sum W \\
&\quad + k(k-1)\left(\sum W + 2n\right) + 2ck(2W(G) + 2n) \\
&= 2(c^2 + 2ck)W(G) + k^2\sum W + 2n[c(c-1) + k(k-1) + 2ck] \\
&= 2c(c+2k)W(G) + k^2\sum W + 2n(c+k)(c+k-1).
\end{align*}
Recall from Section 3 that the sum of all entries in $W$ is $\sum W = 2W(G) + 2m + 2m^{\prime\prime}$. Substituting this into the total sum and dividing by 2 yields the final expression.
\end{proof}

\begin{theorem}\label{thm:harary_hck}
Let $G$ be a connected graph with order $n$ and size $m$. For any integers $c, k \ge 1$, the Harary index of the $(c,k)$-shadow-splitting graph is:
$${H}(H_{c,k}(G)) = (c+k)^2 H(G) - \frac{k^2}{2}\left(m+\frac{1}{3}m^{\prime\prime}\right) + \frac{n}{4}(c+k)(c+k-1),$$
where $m^{\prime\prime}$ denotes the quantity of edges in $G$ that are not part of any triangle.
\end{theorem}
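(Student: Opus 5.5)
We mirror the proof of Theorem \ref{thm:wiener_hck}, replacing sums of entries by sums of reciprocals of non-zero entries, and take the block structure of $D(H_{c,k}(G))$ from Lemma \ref{lem:shadow_blocks} as given. The Harary index is half of $\sum\overline{D}(H_{c,k}(G))$, so we first compute the reciprocal sum over each type of block and then add them with the appropriate multiplicities.

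\textbf{Block contributions.} The key observation is that in each block of the form $M+2I_n$ the off-diagonal entries coincide with those of $M$, while the diagonal entries all equal $2$.
\begin{itemize}
\item \textbf{Same base copy.} The block $D(G)$ occurs $c$ times as an intra-copy block and contributes $2H(G)$ each time.
\item \textbf{Distinct base copies.} The block $D(G)+2I_n$ occurs for $c(c-1)$ ordered pairs $(l,h)$. Each occurrence contributes $2H(G)+\frac{n}{2}$.
\item \textbf{Same splitting set.} The block $W$ occurs $k$ times and contributes $\sum\overline{W}$ each time.
\item \textbf{Distinct splitting sets.} The block $W+2I_n$ occurs for $k(k-1)$ ordered pairs and contributes $\sum\overline{W}+\frac{n}{2}$ each time.
\item \textbf{Cross blocks.} The block $D(G)+2I_n$ occurs $2ck$ times, counting both $D_{V^{(l)},U^{(r)}}$ and its transpose. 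Each occurrence contributes $2H(G)+\frac{n}{2}$.
\end{itemize}

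\textbf{Collecting terms.} Adding these contributions gives
\[
\sum\overline{D}(H_{c,k}(G)) = 2(c^2+2ck)H(G) + k^2\sum\overline{W} + \frac{n}{2}\bigl[c(c-1)+k(k-1)+2ck\bigr].
\]
The bracket simplifies to $(c+k)(c+k-1)$.

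\textbf{Substituting $\sum\overline{W}$.} From the proof of Theorem \ref{thm:harary_spq} we have $\sum\overline{W}=2H(G)-m-\frac13 m''$. Substituting, the coefficient of $H(G)$ becomes $2(c^2+2ck)+2k^2=2(c+k)^2$. The $k^2$ term becomes $-k^2\left(m+\frac13m''\right)$, and the $n$ term is $\frac{n}{2}(c+k)(c+k-1)$. Halving the whole expression gives exactly the claimed formula.

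\textbf{Main care point.} The step most prone to error is the bookkeeping of block multiplicities. The cross blocks must be counted in both orientations, and the inter-copy blocks must be recognised as $D(G)+2I_n$ rather than $W+2I_n$, as stated in Lemma \ref{lem:shadow_blocks}. This distinction is what makes the coefficient of the $m,m''$ term equal to $k^2$ instead of $c(c-1)+k^2$.

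The reuse of $\sum\overline{W}$ also depends on every entry of $W$ being non-zero off the diagonal, which holds because $G$ is connected. It further depends on $W$ agreeing with $D(G)$ except on edges of $G$, where $W$ takes the value $2$ or $3$. Both facts were already established in Section 3, so nothing new is needed there.
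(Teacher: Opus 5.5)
Your proposal is correct and follows the paper's own proof. You sum reciprocals over the blocks of Lemma \ref{lem:shadow_blocks} with multiplicities $c$, $c(c-1)$, $k$, $k(k-1)$ and $2ck$ to obtain $2(c^2+2ck)H(G)+k^2\sum\overline{W}+\frac{n}{2}(c+k)(c+k-1)$, then substitute $\sum\overline{W}=2H(G)-m-\frac13 m''$ and halve, exactly as the paper does.
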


\begin{proof} The Harary index is half the sum of the reciprocals of the non-zero distance entries. Summing the reciprocals of the entries in the block structure from Lemma \ref{lem:shadow_blocks}, we obtain:
\begin{align*}
\sum \overline{D}(H_{c,k}(G)) &= c(2 H(G)) + c(c-1) \left(2 H(G) + \frac{n}{2}\right) + k \sum \overline{W} \\
&\quad + k(k-1) \left(\sum \overline{W} + \frac{n}{2}\right) + 2ck \left(2 H(G) + \frac{n}{2}\right) \\
&= 2(c^2 + 2ck) H(G) + k^2 \sum \overline{W} + \frac{n}{2} [c(c-1) + k(k-1) + 2ck] \\
&= 2c(c + 2k) H(G) + k^2 \sum \overline{W} + \frac{n}{2}(c+k)(c+k-1).
\end{align*}
Substituting the previously established identity $\sum \overline{W} = 2H(G) - m - \frac{1}{3}m^{\prime\prime}$ into this summation and dividing by 2 yields the desired theorem.
\end{proof}

\vspace{0.5cm}
Table \ref{tab:shadow_splitting} details the exact topological indices for several standard graph families. These formulas are derived by evaluating theorems \ref{thm:wiener_hck} and \ref{thm:harary_hck} under the triangle-free parameter ($m = m^{\prime\prime}$) using the initial data established in Section 2.

\begin{table}[htbp]
\centering
\renewcommand{\arraystretch}{1.8}
\resizebox{\textwidth}{!}{
\begin{tabular}{ll}
\hline
\textbf{Base Graph} & \textbf{Topological Indices for $H_{c,k}(G)$} \\
\hline
\multirow{2}{*}{\textbf{Path} $P_n$ ($n \ge 3$)} & $W = (c+k)^2\frac{n(n^2-1)}{6} + 2k^2(n-1) + n(c+k)(c+k-1)$ \\
& ${H} = (c+k)^2(nH_{n-1}-n+1) - \frac{2k^2}{3}(n-1) + \frac{n}{4}(c+k)(c+k-1)$ \\
\hline
\multirow{2}{*}{\textbf{Cycle} $C_n$ ($n \ge 4$)} & $W = (c+k)^2W(C_n) + 2k^2n + n(c+k)(c+k-1)$ \\
& ${H} = (c+k)^2H(C_n) - \frac{2k^2}{3}n + \frac{n}{4}(c+k)(c+k-1)$ \\
\hline
\multirow{2}{*}{\textbf{Star} $S_n$ ($n \ge 3$)} & $W = (c+k)^2(n-1)^2 + 2k^2(n-1) + n(c+k)(c+k-1)$ \\
& ${H} = (c+k)^2\frac{(n-1)(n+2)}{4} - \frac{2k^2}{3}(n-1) + \frac{n}{4}(c+k)(c+k-1)$ \\
\hline
\multirow{2}{*}{\textbf{Bipartite} $K_{a,b}$} & $W = (c+k)^2(a^2+ab+b^2-a-b) + 2k^2ab + n(c+k)(c+k-1)$ \\
& ${H} = (c+k)^2\left(\frac{a^2+b^2-a-b}{4}+ab\right) - \frac{2k^2}{3}ab + \frac{n}{4}(c+k)(c+k-1)$ \\
\hline
\multirow{2}{*}{\textbf{Complete} $K_n$ ($n \ge 3$)} & $W = [(c+k)^2 + k^2]\frac{n(n-1)}{2} + n(c+k)(c+k-1)$ \\
& ${H} = \left[(c+k)^2 - \frac{k^2}{2}\right]\frac{n(n-1)}{2} + \frac{n}{4}(c+k)(c+k-1)$ \\
\hline
\end{tabular}
}
\caption{Wiener and Harary indices of shadow-splitting graphs for specific families.}
\label{tab:shadow_splitting}
\end{table}




\section{Conclusion}
In this paper, we have studied the Wiener and Harary indices of $(p,q)$-generalized splitting graphs $S_{p,q}(G)$ and $(c,k)$-shadow-splitting graphs $H_{c,k}(G)$. By employing block decompositions of the distance matrices, we derived general closed-form expressions for the Wiener and Harary indices of these two constructions that hold for any connected graph $G$. We then applied these general theorems to obtain exact indices for paths, cycles, complete graphs, stars, and complete bipartite graphs. Future work may focus on investigating degree-based indices for these graph constructions.

\section*{Acknowledgements}
The first author acknowledges the financial support received from CSIR under the CSIR-Junior Research Fellowship (JRF) scheme.

\section*{Declarations}
\textbf{Conflict of interest:} The authors declare that they have no conflict of interest.

\end{document}